\newtheorem{theorem}{Theorem}[section]
\newtheorem{lemma}[theorem]{Lemma}
\newtheorem{corollary}[theorem]{Corollary}
\newtheorem{proposition}[theorem]{Proposition}
\theoremstyle{definition}
\newtheorem{definition}[theorem]{Definition}
\newtheorem{conjecture}[theorem]{Conjecture}
\newtheorem{notation}[theorem]{Notation}
\DeclareMathOperator{\maxgen}{maxgen}
\DeclareMathOperator{\mg}{mg}
\begin{document}
\title{On Gotzmann thresholds and a conjecture of Bonanzinga and Eliahou}

\author{Trung Chau}

\keywords{}

\subjclass[2020]{}

\begin{abstract}
   We obtain a recursive formula for the Gotzmann threshold of a power of a variable. Consequently, we give an affirmative answer to a conjecture of Bonanzinga and Eliahou.
\end{abstract}

\maketitle

\section{Introduction}

Throughout this article, let $\Bbbk$ be a field and $R_n=\Bbbk[x_1,\dots, x_n]$ the polynomial ring in $n$ variables over $\Bbbk$.

Hilbert functions of homogeneous ideals of $R_n$ play a vital role in commutative algebra and algebraic geometry, and have been an active area of research. In 1927, Macaulay proved a lower bound for the growth of Hilbert functions of homogeneous ideals, and provided special ideals that attain this bound, called \emph{lex ideals}. Lex ideals have been used in many applications in combinatorics, e.g., Sperner theory. We refer to \cite{BL05,En97} for more details. Homogeneous ideals with the slowest growth of Hilbert functions are called \emph{Gotzmann}, a class that include lex ideals, as discussed.

Only a few classes of Gotzmann ideals are known in the literature. An article of Murai and
 Hibi \cite{MH08} is among the rare instances where Gotzmann homogeneous ideals are studied, and shown to have very specific structures. On the other hand, there are more successes in determining Gotzmann monomial ideals, but even in this restricted situation it is widely considered to be a difficult task.  Hoefel and Mermin \cite{HM12} provided an explicit formula for all Gotzmann squarefree monomial ideals.  Murai \cite{Mu07} completely classified all Gotzmann monomial ideals in $R_3$. Our focus will be on Gotzmann principal Borel ideals. A monomial ideal $J\subseteq R_n$ is called \emph{Borel-stable} or \emph{strongly stable} if for any monomial $m\in J$ and any variable $x_j\mid m$, we have $x_i(m/x_j)\in J$ for any $1\leq i\leq j$. Given a monomial $u\in R_n$, let $\langle u \rangle$ denote the smallest monomial ideal in $R_n$ that contains $u$, and let $B(u)$ denote the set of minimal generators of $\langle u \rangle$. Assume that $x_n\nmid u$. Bonanzinga \cite{Bo03} showed that there exists an integer $\tau_n(u) \geq 0$ such that
 \[
 \text{$\langle ux_n^d \rangle$ is Gotzmann $\Leftrightarrow$ $d\geq \tau_n(u)$}.
 \]
 The number $\tau_n(u)$ is called the \emph{Gotzmann threshold} of $u$. Using Murai's result \cite{Mu07}, the Gotzmann threshold of a monomial $x_1^ax_2^b\in R_3$ can be determined:
 \[
 \tau_3(x_1^ax_2^b) = \binom{b}{2}.
 \]
 The next case $n=4$ is completely solved by Bonanzinga and Eliahou \cite{BE21}:
 \[
 \tau_4(x_1^ax_2^bx_3^c) = \binom{\binom{b}{2}}{2} + \frac{b+4}{3}\binom{b}{2} + (b+1)\binom{c+1}{2}+\binom{c+1}{3}-c.
 \]
 The difficulty in determining a formula for the Gotzmann threshold of any monomial seems to skyrocket, as noted by Bonanzinga and Eliahou. A slightly easier problem is to determine the summand with the largest growth. As may be seen from the known formulas, the power of $x_2$ seems to be one  dominant term. For this reason, Bonanzinga and Eliahou \cite{BE24} studied the Gotzmann threshold of only a power of $x_2$, namely $x_2^d$. They obtained
 \[
 \tau_5(x_2^d) = \binom{\binom{\binom{d}{2}}{2}+\binom{d+1}{3}+\binom{d}{2}}{2}
 - \binom{\binom{d}{2}}{3} +\binom{d+3}{4}-d. \]
 With the formulae for $\tau_i(x_2^d)$ avaialble for $i\in \{3,4,5\}$, Bonanzinga and Eliahou proposed the following.

 \begin{conjecture}[\protect{\cite[Conjecture 5.3]{BE24}}]\label{conjec}
     For $n\geq 3$, the Gotzmann threshold $\tau_n(x_2^d)$ is a polynomial of degree $2^{n-2}$ in $d$ with the same dominant term as the $(n-2)$-iterated binomial coefficient
     \[
 \begin{pmatrix}
     \binom{\binom{d}{2}}{2}\\
     \cdots\\
     2
 \end{pmatrix},
 \]
    which is $2(d/2)^{2^{n-2}}$.
 \end{conjecture}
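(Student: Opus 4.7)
The plan is to invoke the recursive formula for $\tau_n(x_2^d)$ promised by the paper's main theorem and then argue by induction on $n$. Inspection of the known closed forms for $n\in\{3,4,5\}$ strongly suggests a recursion of the shape
\[
\tau_n(x_2^d) = \binom{\tau_{n-1}(x_2^d)}{2} + R_n(d),
\]
where $R_n(d)$ is a polynomial in $d$ of degree strictly less than $2^{n-2}$. For instance, a direct calculation shows
\[
\tau_5(x_2^d) - \binom{\tau_4(x_2^d)}{2} = \binom{d+3}{4} - d - \binom{\binom{d}{2}}{3},
\]
which has degree $6 < 2^{5-2}$. Granting such a recursion, the conjecture reduces to a clean induction on $n$.

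For the base case $n=3$, the given formula $\tau_3(x_2^d) = \binom{d}{2}$ is a polynomial in $d$ of degree $2 = 2^{3-2}$ with leading term $d^2/2 = 2(d/2)^{2^{3-2}}$. For the inductive step, assume $\tau_{n-1}(x_2^d)$ is a polynomial in $d$ of degree $2^{n-3}$ with leading coefficient $c_{n-1} = 2^{1-2^{n-3}}$, matching $2(d/2)^{2^{n-3}}$. Then
\[
\binom{\tau_{n-1}(x_2^d)}{2} = \frac{\tau_{n-1}(x_2^d)\bigl(\tau_{n-1}(x_2^d) - 1\bigr)}{2}
\]
is a polynomial of degree $2^{n-2}$ with leading coefficient $c_{n-1}^2/2 = 2^{1-2^{n-2}}$, which is precisely the leading coefficient of $2(d/2)^{2^{n-2}}$. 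Since $R_n(d)$ has degree strictly less than $2^{n-2}$, adding it preserves both the degree and the dominant term, completing the induction.

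The crux is therefore to establish the recursive identity and the degree bound on $R_n(d)$; the asymptotic analysis above is then essentially automatic. I would expect the recursion to come from comparing the minimal lex segment containing the Borel orbit of $x_2^d$ in $R_n$ against its counterpart in $R_{n-1}$, via a Gotzmann persistence argument combined with the combinatorial structure of Gotzmann principal Borel ideals (in the spirit of Murai--Hibi and Bonanzinga--Eliahou). The main obstacle is thus deriving the recursion and controlling the error term $R_n(d)$; the two small cases already illustrate that $R_n$ admits a short expression as a sum of iterated binomial coefficients in $d$, and one would aim to exhibit such an expression explicitly via careful generator counts along the recursion.
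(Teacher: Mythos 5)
Your inductive leading-term analysis is correct and is exactly how the paper deduces the conjecture: its Corollary 1.3 gives
\[
\tau_{n}(x_2^d)= \binom{d+n-2}{n-1} -d+ \sum_{i=3}^{n-1}(-1)^{n+1-i} \binom{\tau_{i}(x_2^d)}{n+1-i},
\]
whose $i=n-1$ term is $+\binom{\tau_{n-1}(x_2^d)}{2}$, so the recursion does have your shape $\tau_n=\binom{\tau_{n-1}}{2}+R_n(d)$, and the leading-coefficient bookkeeping $c_{n-1}^2/2=2^{1-2^{n-2}}$ matches the paper's one-line ``by induction'' deduction. But as written your attempt has a genuine gap: the recursion and the bound $\deg R_n<2^{n-2}$ are precisely what you never establish. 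You extrapolate the shape from the closed forms for $n\in\{3,4,5\}$, explicitly call establishing it ``the crux'' and ``the main obstacle,'' and offer only a vague hope that it follows from comparing lex segments of Borel orbits via Gotzmann persistence. That identity is the entire content of the paper: Section 3 derives it by computing the Bonanzinga--Eliahou polynomials $f(t),h(t),k(t)$ for $x_r^d$ through explicit costs of upward paths and the resulting $\delta_j$ recursion, and nothing in your sketch substitutes for that work. A conjectured recursion verified in three small cases is not a proof of the statement.

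If instead you genuinely invoke the paper's Corollary 1.3 as a black box (as your opening sentence suggests), the argument does close and becomes essentially the paper's, but you still owe the degree bound on the residual terms inside the same induction: $\binom{d+n-2}{n-1}$ has degree $n-1<2^{n-2}$ for $n\geq 4$, and for $3\leq i\leq n-2$ the term $\binom{\tau_i(x_2^d)}{n+1-i}$ has degree $(n+1-i)\,2^{i-2}<2^{n-2}$, since $k+1<2^{k}$ for $k=n-i\geq 2$. This check is routine but necessary, and replacing it with the empirical observation that $\tau_5-\binom{\tau_4}{2}$ has degree $6$ does not cover general $n$.
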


 In this article, we obtain an iterative formula for $\tau_n(x_r^d)$ for any integers $n,r,d$.

 \begin{theorem}\label{thm:main-4-recursive}
    Given integers $n,r$ where $2\leq r<n-1$. Then the Gotzmann threshold $\tau_n(x_r^d)$ in $\Bbbk[x_1,\dots ,x_{n}]$ can be computed recursively:
    \begin{align*}
        \tau_{r+1}(x_r^d) &= \binom{d+r-1}{r}-d,\\
        \tau_{n}(x_r^d)&= \binom{d+n-1}{d} - \binom{d+n-r}{d} - \binom{d+r-1}{d} +1
            + \sum_{i=r+1}^{n-1} (-1)^{n+1-i}\binom{\tau_{i}(x_r^d)}{n+1-i}.
    \end{align*}
\end{theorem}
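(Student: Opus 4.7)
My plan is to induct on $n$ with $r$ and $d$ fixed, using the criterion that a Borel-stable ideal generated in a single degree $e$ is Gotzmann if and only if $|I_{e+1}|$ meets the Macaulay minimum. The combinatorial starting point is the description of $B(x_r^d x_n^{d'})$ as the set of monomials of degree $d+d'$ in $R_n$ whose total degree in $\{x_{r+1},\dots,x_n\}$ is at most $d'$. This description yields closed-form counts for both $|B(x_r^d x_n^{d'})|$ and $|\langle x_r^d x_n^{d'}\rangle_{d+d'+1}|$, which are the two inputs the Gotzmann test needs.

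For the base case $n=r+1$, the minimal generators lie in the first $r+1$ variables with $x_{r+1}$-degree at most $d'$, so both Hilbert coefficients can be expressed as simple binomials; imposing the Gotzmann equality on the Macaulay expansion of $|B(x_r^d x_{r+1}^{d'})|$ reduces, after simplification, to the integer inequality $d'\geq\binom{d+r-1}{r}-d$, which identifies the threshold. For the inductive step, I would stratify $B(x_r^d x_n^{d'})$ by the largest variable index appearing in a generator: the piece with top index $i$ identifies with the generators of a principal Borel ideal in $R_i$ of an analogous form, so the induction hypothesis pins the associated sub-threshold to $\tau_i(x_r^d)$. Inclusion--exclusion over $r<i<n$ then produces the alternating sum $\sum_{i=r+1}^{n-1}(-1)^{n+1-i}\binom{\tau_i(x_r^d)}{n+1-i}$, while the binomial combination $\binom{d+n-1}{d}-\binom{d+n-r}{d}-\binom{d+r-1}{d}+1$ appears as the inclusion--exclusion count of those degree-$d$ monomials in $R_n$ that use at least one variable from each of $\{x_1,\dots,x_{r-1}\}$ and $\{x_{r+1},\dots,x_n\}$, i.e.\ the contribution genuinely new to $R_n$.

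The main obstacle, I expect, will be making this inductive decomposition sufficiently sharp that the Macaulay representation of $|B(x_r^d x_n^{d'})|$ at the threshold decomposes cleanly in terms of the Macaulay representations supplied by the lower thresholds $\tau_i(x_r^d)$, so that the Gotzmann equation in $R_n$ reduces to the already-established Gotzmann equations in $R_{r+1},\dots,R_{n-1}$. This bookkeeping should lean on the structural machinery of maximum generators and predecessors ($\pred$, $\maxgen$, $\mg$, $\mcc$) built earlier in the paper, which is tailored precisely for tracking such data across principal Borel ideals.
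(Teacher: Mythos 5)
Your proposal has a genuine gap at its heart: the inductive step. The stratification of $B(x_r^d x_n^{d'})$ by the largest variable index appearing in a generator gives you \emph{cardinalities} of sets of monomials, which are binomial-type counts in $d$ and $d'$; it does not give you any quantity equal to, or directly expressible through, the lower thresholds $\tau_i(x_r^d)$. Saying that ``the induction hypothesis pins the associated sub-threshold to $\tau_i(x_r^d)$'' conflates a formula for a threshold with a count of generators, and no mechanism is offered by which an inclusion--exclusion over these strata would produce correction terms of the specific shape $(-1)^{n+1-i}\binom{\tau_i(x_r^d)}{n+1-i}$, i.e.\ binomial coefficients whose \emph{top} entries are the (rapidly growing) lower thresholds themselves. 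In the paper these terms do not come from any partition of the generating set: they emerge only at the end of a chain of computations in the Bonanzinga--Eliahou framework --- the recursion $\delta_j=f_j+\binom{-\sum_l\delta_l+1}{2}+\sum_i\binom{-\sum_l\delta_l+i-2}{i}$ obtained by tracking costs $\mu_n$ of lexintervals via Lemma~\ref{lem:smallest-cost-formula}, the identity $\tau_m(x_r^d)=\sum_{j\le m}\delta_j$ (Theorem~\ref{thm:main-3-short-and-succint}, which rests on the formula $\tau_n(u_0)=f(0)-h(0)-k(0)$ from \cite{BE24}), and a final resummation using Lemma~\ref{lem:comb-2} together with the negative-binomial identity $\binom{-a}{b}=(-1)^b\binom{a+b-1}{b}$. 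Your own closing paragraph concedes that making the decomposition ``sufficiently sharp'' so that the Macaulay representation of $|B(x_r^d x_n^{d'})|$ splits along the lower Gotzmann equations is the main obstacle --- but that is precisely the entire content of the theorem, not a bookkeeping detail, and the machinery you appeal to ($\pred$, $\mcc$) is only declared in the preamble, never developed; the tools actually used are $\mg$, $\sigma_n$, the truncations $\pi_i$, and the cost function $\mu_n$ from \cite{BE24}.

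Two smaller points. Your base case is plausible: for $n=r+1$ the counts $|B(x_r^dx_{r+1}^{d'})|$ and $|\langle x_r^dx_{r+1}^{d'}\rangle_{d+d'+1}|$ are indeed simple binomials, and a direct Macaulay-representation computation should recover $\tau_{r+1}(x_r^d)=\binom{d+r-1}{r}-d$ (in the paper this is just $\delta_{r+1}=f_{r+1}$), though you would still have to carry out the expansion rather than assert it. Your observation that $\binom{d+n-1}{d}-\binom{d+n-r}{d}-\binom{d+r-1}{d}+1$ counts degree-$d$ monomials in $R_n$ using at least one variable from $\{x_1,\dots,x_{r-1}\}$ and one from $\{x_{r+1},\dots,x_n\}$ is a correct identity, but in the proof this quantity arises as $\sum_{j=r+1}^n f_j$, a sum of $x_j$-degrees of $\mg_n(u_0)$, and the combinatorial reinterpretation does not by itself connect it to your stratification. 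As it stands, the proposal is a program whose decisive step is missing, not a proof.
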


The case $r=2$ is of interest and presented in the following.

\begin{corollary}\label{cor:r=2}
    Given an integer $n>3$. Then the Gotzmann threshold $\tau_n(x_2^d)$ in $\Bbbk[x_1,\dots ,x_{n}]$ can be computed recursively:
    \begin{align*}
        \tau_{3}(x_2^d) &= \binom{d+1}{2}-d,\\
        \tau_{n}(x_2^d)&= \binom{d+n-2}{n-1} -d+ \sum_{i=3}^{n-1}(-1)^{n+1-i} \binom{\tau_{i}(x_2^d)}{n+1-i}.
    \end{align*}
\end{corollary}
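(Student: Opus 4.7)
The plan is to obtain the corollary as a direct specialization of \cref{thm:main-4-recursive} with $r=2$, followed by a short binomial simplification. The main (and essentially only) content is checking that the $r$-dependent ``constant'' terms in the main recursion collapse to the shape stated here.

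First I would set $r=2$ in the base case of \cref{thm:main-4-recursive}, which yields $\tau_{3}(x_2^d) = \binom{d+1}{2} - d$ verbatim. Then I would substitute $r=2$ into the recursive formula, obtaining
\[
\tau_n(x_2^d) = \binom{d+n-1}{d} - \binom{d+n-2}{d} - \binom{d+1}{d} + 1 + \sum_{i=3}^{n-1}(-1)^{n+1-i}\binom{\tau_i(x_2^d)}{n+1-i}.
\]
Since the summation matches the target formula exactly, it suffices to verify the identity
\[
\binom{d+n-1}{d} - \binom{d+n-2}{d} - \binom{d+1}{d} + 1 = \binom{d+n-2}{n-1} - d.
\]

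For this last step I would apply Pascal's rule $\binom{d+n-1}{d} = \binom{d+n-2}{d} + \binom{d+n-2}{d-1}$, so that the first difference collapses to $\binom{d+n-2}{d-1} = \binom{d+n-2}{n-1}$, and then use $\binom{d+1}{d} = d+1$ to reduce the remaining constants to $-d$. Combining these gives the claimed closed form, which completes the derivation. There is no real obstacle; the only thing to be careful about is the symmetry step $\binom{d+n-2}{d-1} = \binom{d+n-2}{n-1}$, which requires $n \geq 2$, and this is guaranteed by the hypothesis $n>3$.
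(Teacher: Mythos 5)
Your proposal is correct and matches what the paper intends: the corollary is exactly the specialization $r=2$ of Theorem~\ref{thm:main-4-recursive}, with the constant terms simplified via Pascal's rule, the symmetry $\binom{d+n-2}{d-1}=\binom{d+n-2}{n-1}$, and $\binom{d+1}{d}=d+1$, just as you did (and the hypothesis $n>3$ is precisely the condition $r<n-1$ needed to invoke the theorem).
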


By induction, the answer to Conjecture~\ref{conjec} follows immediately.

\begin{corollary}
    The Gotzmann threshold $\tau_n(x_2^d)$ is a polynomial in $d$ of degree $2^{n-2}$, with the dominant term being the same as that of the $(n-2)$-iterated binomial coefficient
 \[
 \begin{pmatrix}
     \binom{\binom{d}{2}}{2}\\
     \cdots\\
     2
 \end{pmatrix}.
 \]
    In other words, Conjecture~\ref{conjec} holds.
\end{corollary}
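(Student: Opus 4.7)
The plan is to induct on $n\ge 3$ using Corollary~\ref{cor:r=2}. For the base case $n=3$, one directly computes $\tau_3(x_2^d) = \binom{d+1}{2} - d = \binom{d}{2}$, which is a polynomial in $d$ of degree $2 = 2^{3-2}$ with dominant term $d^2/2 = 2(d/2)^2$, matching the $1$-iterated binomial coefficient $\binom{d}{2}$.

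For the induction step, write $T_i(d) := \tau_i(x_2^d)$ and assume that for each $3 \le i < n$ the function $T_i(d)$ is a polynomial in $d$ of degree $2^{i-2}$ with dominant term $2(d/2)^{2^{i-2}}$. Since $\binom{X}{k} = X(X-1)\cdots(X-k+1)/k!$ is a polynomial in $X$ of degree $k$, the recursion in Corollary~\ref{cor:r=2} realizes $T_n(d)$ as a $\mathbb{Z}$-linear combination of polynomials in $d$ of degrees $n-1$, $1$, and
\[
f(i) := (n+1-i)\cdot 2^{i-2} \quad\text{for } 3\le i\le n-1.
\]
A short computation gives $f(i+1)/f(i) = 2(n-i)/(n+1-i)$, which exceeds $1$ precisely when $i < n-1$; hence $f$ strictly increases on $\{3,\dots,n-1\}$ and is maximized at $f(n-1) = 2\cdot 2^{n-3} = 2^{n-2}$. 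One also checks $n-1 < 2^{n-2}$ for $n\ge 4$. Therefore the unique summand contributing in degree $2^{n-2}$ is $(-1)^{n+1-(n-1)}\binom{T_{n-1}(d)}{2} = \binom{T_{n-1}(d)}{2}$, whose leading term is $T_{n-1}(d)^2/2$. Squaring the inductive dominant term $2(d/2)^{2^{n-3}}$ and dividing by $2$ produces $2(d/2)^{2^{n-2}}$, which is the claimed dominant term for $T_n(d)$ and closes the induction.

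To identify this with the iterated binomial coefficient in the conjecture, a parallel one-line induction suffices: if a polynomial $B(d)$ has dominant term $2(d/2)^{2^{k-1}}$, then $\binom{B(d)}{2} \sim B(d)^2/2$ has dominant term $2(d/2)^{2^k}$, so starting from $\binom{d}{2}$ and iterating $n-2$ times yields dominant term $2(d/2)^{2^{n-2}}$. The main obstacle, modest as it is, is the combinatorial bookkeeping required to certify that no competing summand reaches degree $2^{n-2}$; concretely, this reduces to the monotonicity of $f(i)$ above together with the elementary inequality $n-1 < 2^{n-2}$ for $n\ge 4$, both of which are straightforward.
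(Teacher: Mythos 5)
Your proof is correct and follows essentially the same route as the paper, which simply asserts that the conjecture follows by induction from Corollary~\ref{cor:r=2}; you supply exactly that induction, correctly identifying the $i=n-1$ term $\binom{\tau_{n-1}(x_2^d)}{2}$ as the unique summand of degree $2^{n-2}$ (via the monotonicity of $(n+1-i)2^{i-2}$ and $n-1<2^{n-2}$) and matching its leading term $2(d/2)^{2^{n-2}}$ with that of the iterated binomial coefficient.
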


We can also recover known formulas for $\tau_n(x_2^d)$ for $n\in [3,5]$, in a different form.

\begin{corollary}
    \begin{align*}
        \tau_3(x_2^d)&=\binom{d+1}{2}-d,\\
        \tau_4(x_2^d)&=\binom{d+2}{3}-d+\binom{\tau_3(x_2^d)}{2},\\
        \tau_5(x_2^d)&=\binom{d+3}{4}-d+\binom{\tau_4(x_2^d)}{2}-\binom{\tau_3(x_2^d)}{3}.
    \end{align*}
\end{corollary}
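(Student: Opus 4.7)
The plan is to obtain each of the three displayed identities by directly specializing the recursion in Corollary~\ref{cor:r=2} to the values $n=3,4,5$, with no additional work beyond tracking the indexing and signs in the alternating sum.

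First I would note that the base case $\tau_3(x_2^d)=\binom{d+1}{2}-d$ is literally the first line of Corollary~\ref{cor:r=2}, so there is nothing to prove. For $n=4$, I would write out the recursion: the sum $\sum_{i=3}^{n-1}$ collapses to a single term $i=3$, contributing $(-1)^{4+1-3}\binom{\tau_3(x_2^d)}{4+1-3}=\binom{\tau_3(x_2^d)}{2}$, and the leading binomial is $\binom{d+n-2}{n-1}=\binom{d+2}{3}$, giving exactly the stated formula. For $n=5$, the sum has two terms, at $i=3$ and $i=4$; the former contributes $(-1)^{3}\binom{\tau_3(x_2^d)}{3}=-\binom{\tau_3(x_2^d)}{3}$, while the latter contributes $(-1)^{2}\binom{\tau_4(x_2^d)}{2}=\binom{\tau_4(x_2^d)}{2}$, and combined with $\binom{d+3}{4}-d$ this is the third identity.

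There is no substantive obstacle: each line is a purely mechanical specialization, and the only thing to be careful about is the parity of $n+1-i$ which controls whether each binomial in the sum appears with a plus or minus sign. This is why $\binom{\tau_3(x_2^d)}{3}$ gets a minus sign in the $n=5$ case while $\binom{\tau_3(x_2^d)}{2}$ appears with a plus sign in the $n=4$ case.
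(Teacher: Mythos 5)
Your proposal is correct and matches the paper's (implicit) argument exactly: the corollary is just the specialization of the recursion in Corollary~\ref{cor:r=2} at $n=4$ and $n=5$, together with its base case for $\tau_3(x_2^d)$, and your sign bookkeeping $(-1)^{n+1-i}$ for the terms $i=3,4$ is accurate.
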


The proof for our main result, Theorem~\ref{thm:main-4-recursive}, is based on the machinery by Bonanzinga and Eliahou \cite{BE24}.  Mermin has informed the author that Costantini, Francisco, Schweig, and himself have confirmed Conjecture~\ref{conjec} independently \cite{Gotzmann-Mermin}. Their tools and techniques, on the other hand, are different and rely on other~theories.

The article is structured as follows. In Section~\ref{sec:pre}, we recall the concepts, tools, and the machinery to compute Gotzmann thresholds by Bonanzinga and Eliahou. We apply these in Section~\ref{sec:proof} to prove Theorem~\ref{thm:main-4-recursive}.

\section*{Acknowledgements}

The author is supported by the Infosys Foundation. The author would like to thank Manoj Kummini for helpful and productive discussions. The author would like to thank Jeffrey Mermin for encouraging him to complete this project.

\section{Preliminaries}\label{sec:pre}

We recall the machinery developed in \cite{BE24}.

\subsection{Computation of Gotzmann thresholds}

Given a polynomial ring $R_n=\Bbbk[x_1,\dots, x_n]$. Let $S_n$ denote the set of monomials in $R_n$, and $S_{n,d}$ the set of monomials of degree $d$ in $R_n$.

\begin{notation}
    Given a monomial $u\in S_n\setminus \{1\}$ and a set $B$ of monomials of degree at least 1. Set
    \begin{align*}
        \max(u)&= \max\{k\in [n]\colon x_k\mid u  \},\\
        \lambda(u)&= x_{\max(u)},\\
        \maxgen(B)&=\prod_{u\in B} \lambda(u).
    \end{align*}
\end{notation}

We impose the lexicographial order on $S_{n,d}$ induced by $x_1>x_2>\cdots> x_n$.

\begin{definition}[\protect{\cite[Notation 2.5 and 2.6]{BE24}}]
    For $u,v\in S_{n,d}$ where $u\leq v$, set
    \begin{align*}
        L_n(u)&= \{ z\in S_{n,d} \colon z\geq u \},\\
        L^*_n(v,u)&= \{ z\in S_{n,d} \colon v>z\geq u \}=L_n(u)\setminus L_n(v),\\
        \mu_n(u,v)&= \maxgen(L^*_n(v,u)).
    \end{align*}
    $L_n(u)$ and $L^*_n(v,u)$ are called \emph{lexsegments} and \emph{lexintervals}, respectively. The monomial $\mu_n(u,v)$ is called the \emph{cost} of the upward path from $u$ to $v$ in $S_{n,d}$.  We will use the notation
    \[
    u\xrightarrow{\mu_n(u,v)} v
    \]
    for the upward path from $u$ to $v$, along with its cost.
\end{definition}

\begin{notation}[\protect{\cite[Notation 2.9]{BE24}}]
    Given $n\geq 3$ and $u\in S_n$. Set 
    \[
    \mg_n(u) = \maxgen (L_n(u)\setminus B(u)).
    \]
    Note that $\mg_n$ is a function from $S_n$ to itself.  When $n$ is clear, we will simply write~$\mg$.
\end{notation}

\begin{notation}[\protect{\cite[Notation 2.13]{BE24}}]
    Let $n\geq i\geq 1$ be positive integers. Set
    \begin{align*}
        \pi_{n,i}\colon S_n&\to S_i\\
        x_1^{a_1}\cdots x_n^{a_n}&\mapsto x_1^{a_1}\cdots x_i^{a_i}
    \end{align*}
    to be the \emph{truncation map}. When $n$ is clear, we will simply write $\pi_i$.
\end{notation}

\begin{notation}[\protect{\cite[Notation~3.1]{BE24}}]
    Let $\sigma_n$ be defined as follows:
    \begin{align*}
        \sigma_n \colon S_n&\to S_n\\
        \prod_{i=1}^n x_i^{a_i} &\mapsto \prod_{i=1}^n x_i^{a_1+a_2+\cdots+ a_i} = x_1^{a_1}x_2^{a_1+a_2}\cdots x_n^{a_1+a_2+\cdots +a_n}.
    \end{align*}
\end{notation}

The map $\sigma_n$ allows for some easy computation.

\begin{lemma}[\protect{\cite[Proposition 3.4 and Corollary 3.6]{BE24}}]\label{lem:mg=sigma()}
    For any $u= \prod_{i=1}^n x_i^{a_i}\in S_n$ and $t\geq 0$, we have
    \[
    \mg_n(ux_n^t)=\sigma_n^t(\mg_n(u))=\prod_{i=1}^n x_i^{\sum_{j=1}^i a_j\binom{t-1+i-j}{t-1}}.
    \]
\end{lemma}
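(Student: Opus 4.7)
The plan is to split the lemma into its two constituent equalities: the functional identity $\mg_n(u x_n^t) = \sigma_n^t(\mg_n(u))$, and the closed form for iterating $\sigma_n$. The first encodes a structural interaction between lexsegments, Borel closures, and the operation $v \mapsto v x_n$; the second is a combinatorial identity about iterated partial sums.

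For the functional identity, I would induct on $t$. The base $t = 0$ is trivial, and the inductive step reduces to showing $\mg_n(v x_n) = \sigma_n(\mg_n(v))$ for every $v \in S_n$. Writing $\mg_n(v) = \prod x_i^{c_i}$ with $c_i = |\{w \in L_n(v) \setminus B(v) : \max(w) = i\}|$, this target becomes the cumulative counting identity
\[
|\{w' \in L_n(v x_n) \setminus B(v x_n) : \max(w') = i\}| = c_1 + c_2 + \cdots + c_i, \qquad 1 \leq i \leq n.
\]
I would attempt to prove this by constructing, for each $i$, a bijection between the level-$i$ fiber of the left-hand set and the disjoint union of the level-$k$ fibers of $L_n(v) \setminus B(v)$ for $k \leq i$. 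A natural candidate sends a monomial $w$ with $\max(w) = k \leq i$ to a degree-bumped variant formed by adjoining $x_i$ and making compensating adjustments, and one must verify both preservation of the lexsegment inequality and avoidance of the Borel closure. This structural bijection is the heart of the argument.

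For the closed form, write $\sigma_n^t(v) = \prod x_i^{E_i(t)}$ for $v = \prod x_i^{a_i}$, and induct on $t$. The base $t = 1$ is immediate from the definition of $\sigma_n$. Assuming $E_i(t) = \sum_{j=1}^i a_j \binom{t-1+i-j}{t-1}$, compute
\[
E_i(t+1) = \sum_{k=1}^i E_k(t) = \sum_{k=1}^i \sum_{j=1}^k a_j \binom{t-1+k-j}{t-1} = \sum_{j=1}^i a_j \binom{t+i-j}{t},
\]
by interchanging the order of summation and invoking the hockey-stick identity $\sum_{k=j}^i \binom{t-1+k-j}{t-1} = \binom{t+i-j}{t}$. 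This is precisely the formula for $t+1$, closing the induction; applying it with $v = \mg_n(u)$ yields the stated expression.

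The main obstacle is the structural identity $\mg_n(v x_n) = \sigma_n(\mg_n(v))$: both sides involve counting monomials with specified $\max$-statistics under an intricate interplay of the lexsegment and Borel-closure conditions, and the degree-bumping $v \mapsto v x_n$ must be shown to redistribute these counts into partial sums. Once this structural claim is established, iteration and the standard binomial identity dispose of the rest.
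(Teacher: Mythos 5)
First, a point of comparison: the paper itself gives no proof of this lemma --- it is imported verbatim from \cite[Proposition 3.4 and Corollary 3.6]{BE24} --- so there is no in-paper argument to measure your route against; what follows assesses your proposal on its own terms. Your second half (the closed form for $\sigma_n^t$) is complete and correct: the induction on $t$, the interchange of summation, and the hockey-stick identity are exactly what is needed. The reduction of the functional identity to the single-step statement $\mg_n(vx_n)=\sigma_n(\mg_n(v))$, and its restatement as the partial-sum identity on $\max$-fibers, is also the right move.

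The gap is that the step you yourself call the heart of the argument is only gestured at. ``Adjoining $x_i$ and making compensating adjustments'' is not yet a map, let alone a verified bijection; in fact no adjustments are needed, and pinning this down is where the actual work lies. Every $w'\in S_{n,d+1}$ factors uniquely as $w'=w\,\lambda(w')$ with $w\in S_{n,d}$ and $\max(w)\le \max(w')$, so the candidate map is simply $w\mapsto wx_i$ for $\max(w)\le i$, and what must be proved are the two compatibilities: (i) $w x_j\ge vx_n$ in the lexicographic order if and only if $w\ge v$, and (ii) $wx_j\in B(vx_n)$ if and only if $w\in B(v)$ (always with $j\ge\max(w)$). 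Point (ii) follows from the explicit description of the minimal generators of a principal Borel ideal, $B(x_{i_1}\cdots x_{i_d})=\{x_{j_1}\cdots x_{j_d}\colon j_1\le\cdots\le j_d,\ j_k\le i_k \text{ for all } k\}$, since deleting or adjoining the largest variable respects this componentwise condition; your sketch never invokes any such structural fact about $B(\cdot)$, and without one the ``avoidance of the Borel closure'' cannot be checked. Point (i) also needs a genuine (if short) argument: one must rule out that multiplying a monomial $w<v$ by $x_j$ overtakes $vx_n$, and the saving observation is that, because $\deg w=\deg v$, the first index $m$ at which $w$ and $v$ differ satisfies $m\le\max(w)\le j$, after which the lex comparison goes through. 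With (i) and (ii) one gets $L_n(vx_n)\setminus B(vx_n)=\{wx_j\colon w\in L_n(v)\setminus B(v),\ j\ge\max(w)\}$, each such $w$ contributes exactly one element with $\max$ equal to $i$ for every $i\ge\max(w)$, and your counting identity follows. So the plan is viable and would yield a correct proof, but as written it omits precisely the nontrivial verifications; until they are supplied it is an outline rather than a proof.
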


For the rest of this subsection, assume that $u_0\in S_{n-1}$. We will recall results in \cite{BE24} that allow a computational route to $\tau_n(u_0)$. Spoiler alerts: we need three polynomials $f(t),h(t),k(t)$ associated to $u_0$.

\begin{notation}[\protect{\cite[Notation~3.10]{BE24}}]
    Let $u_0\in S_{n-1}$. For $t\in \mathbb{N}$, set
    \[
    f(t)\coloneqq \deg_{x_n}(\mg_n(u_0x_n^t)).
    \]
\end{notation}

An explicit formula for $f(t)$ is as follows.

\begin{lemma}[\protect{\cite[Theorem~3.11]{BE24}}]\label{lem:f-formula}
    Let $u=x_{i_1}\cdots x_{i_d}\in S_{n-1}$ with $i_1\leq \cdots \leq i_d\leq n-1$ and $n\geq 3$. For $t\geq 0$, we have
    \[
    f(t)=\sum_{k=1}^{r-1}\binom{t+d-k-2+n-i_{k+1}}{n-1-i_{k+1}}\left( |B(x_{i_1}\cdots x_{i_k})| -1  \right).
    \]
\end{lemma}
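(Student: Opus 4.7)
My plan is to prove the formula by a direct combinatorial partition. Since $\mg_n(u)=\prod_{v\in L_n(u)\setminus B(u)}x_{\max(v)}$ by the definition of $\maxgen$, extracting the $x_n$-exponent gives
\[
f(t)=\bigl|\{v\in L_n(u_0x_n^t)\setminus B(u_0x_n^t) : \max(v)=n\}\bigr|.
\]
The key idea is to classify such $v$ by its \emph{first Borel violation}. Writing $v=x_{j_1}\cdots x_{j_{d+t}}$ with $j_1\leq\cdots\leq j_{d+t}$ and recalling that $u_0x_n^t$ has sorted index sequence $(i_1,\ldots,i_d,n,\ldots,n)$, the condition $v\notin B(u_0x_n^t)$ provides a smallest $s\in\{1,\ldots,d\}$ with $j_s>i_s$; set $k:=s-1$.

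For each $k\in\{1,\ldots,d-1\}$ I would count the contribution as follows. The tail $(j_{k+1},\ldots,j_{d+t})$ is a sorted sequence in $\{i_{k+1}+1,\ldots,n\}$ of length $d+t-k$, i.e., a monomial of that degree in the $n-i_{k+1}$ variables $x_{i_{k+1}+1},\ldots,x_n$. The requirement $\max(v)=n$ forces $x_n$ to divide this tail (the prefix lives in variables of index $\leq i_k<n$), so the number of admissible tails is
\[
\binom{(d+t-k-1)+(n-i_{k+1})-1}{(n-i_{k+1})-1} \;=\; \binom{t+d-k-2+n-i_{k+1}}{n-1-i_{k+1}},
\]
precisely the binomial in the statement. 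The prefix $(j_1,\ldots,j_k)$ must satisfy $j_s\leq i_s$ and hence lies in $B(x_{i_1}\cdots x_{i_k})$; however, the lex inequality $v\geq u_0x_n^t$ rules out the equal prefix, because if $j_s=i_s$ for all $s\leq k$ then comparing at position $k+1$ gives $j_{k+1}>i_{k+1}$ and so $v<_{\mathrm{lex}}u_0x_n^t$. This leaves exactly $|B(x_{i_1}\cdots x_{i_k})|-1$ admissible prefixes.

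Multiplying the prefix and tail counts and summing over $k$ produces the claimed identity. The extremes contribute nothing and justify the range $1\leq k\leq d-1$: $k=0$ would force $j_1>i_1$ and hence $v<_{\mathrm{lex}}u_0x_n^t$, while $k=d$ would demand a violation at position $d+1$, impossible since $j_{d+1}\leq n$. The main obstacle will be the bookkeeping confirming that the partition is a genuine bijection: one must verify that concatenating a permissible prefix and tail always yields a sorted sequence (which follows from $j_k\leq i_k\leq i_{k+1}<j_{k+1}$) and that repeated indices among the $i_j$'s do not create subtle overcounting.
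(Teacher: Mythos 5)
Your argument is correct, but be aware that the paper itself contains no proof of this statement: it is imported verbatim from \cite[Theorem~3.11]{BE24} (and the upper summation limit $r-1$ in the statement is a typo for $d-1$, which is exactly the range your count produces and the range used later in the paper). So the real comparison is with Bonanzinga--Eliahou's original argument, which goes through their structural machinery for $\mg_n$ (the recursion $\mg_n(ux_n^t)=\sigma_n^t(\mg_n(u))$ and iterated binomial identities), whereas you count directly: $f(t)$ is the number of $v\in L_n(u_0x_n^t)\setminus B(u_0x_n^t)$ with $\max(v)=n$, stratified by the first position $k+1$ at which the sorted index sequence of $v$ exceeds that of $u_0x_n^t$. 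Your stratification is indeed a bijection: $k$ is intrinsic to $v$; the inequality $j_k\le i_k\le i_{k+1}<j_{k+1}$ guarantees that any admissible prefix--tail pair concatenates to a sorted sequence whose first violation is again at position $k+1$; the lex condition $v\ge u_0x_n^t$ is, given $j_s\le i_s$ for $s\le k$ and $j_{k+1}>i_{k+1}$, equivalent to the prefix differing from $x_{i_1}\cdots x_{i_k}$, which accounts for the $-1$; the tails divisible by $x_n$ number $\binom{t+d-k-2+n-i_{k+1}}{n-1-i_{k+1}}$; and $k=0$, $k=d$ are correctly discarded. The two facts you use implicitly should be stated: the description of $B(w)$ as the monomials whose sorted index sequence is componentwise at most that of $w$ (the paper's phrase ``smallest monomial ideal containing $u$'' really means the smallest strongly stable ideal, as the formula $|B(x_r^k)|=\binom{k+r-1}{r-1}$ confirms), and the description of lex order on monomials of fixed degree via the first differing sorted index. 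With those made explicit, your counting proof is complete, and it is more elementary and self-contained than the cited route.
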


\begin{definition}[\protect{\cite[Notation 4.10 and Definition 4.11]{BE24}}]
    Let $u_0\in S_{n-1}$. For each $t\geq 0$, let us denote
    \[
    w(t)\coloneqq \pi_{n-1}(\mg_n(u_0x_n^t)),
    \]
    i.e., $\mg_n(u_0x_n^t)=w(t)x_n^{f(t)}$.
    
    By definition, for each $t\geq \tau_{n-1}(u_0)$, there exists a smallest monomial $z_n(t)\geq u_0x_n^t$ such that 
    \[
    \pi_{n-1}\left(\mu_n(u_0x_n^t,z_n(t)) \right) = w(t).
    \]
    For each $t\geq \tau_{n-1}(u_0)$, we then set
    \begin{align*}
        h(t)&= \deg_{x_n} \mu_n(u_0x_n^t,z_n(t)),\\
        k(t)&= \deg_{x_n} z_n(t).
    \end{align*}
\end{definition}

We are finally ready to recall our main tool.

\begin{theorem}[\protect{\cite[Theorem~4.15]{BE24}}]
    Let $u_0$ be a monomial in $R_n$, where $n\geq 3$, such that $x_n\nmid u_0$. Then
    \[
    \tau_n(u_0) = f(0) - h(0)-k(0).
    \]
\end{theorem}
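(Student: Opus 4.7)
\emph{Proof proposal.} The plan is to translate the Gotzmann property for $\langle u_0 x_n^t \rangle$ into a combinatorial identity involving the $x_n$-degrees of $\mg_n(u_0 x_n^t)$, of the cost $\mu_n(u_0 x_n^t, z_n(t))$, and of $z_n(t)$ itself, and then to exploit how these quantities evolve as $t$ grows. Underlying everything is a criterion (which ought to be extracted from Bonanzinga's work \cite{Bo03}) to the effect that $\langle u_0 x_n^t \rangle$ is Gotzmann if and only if its minimal generating set $B(u_0 x_n^t)$ is itself a lex segment, i.e.\ $B(u_0 x_n^t) = L_n(v)$ for some $v \geq u_0 x_n^t$. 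Equivalently, the complement $L_n(u_0 x_n^t) \setminus B(u_0 x_n^t)$ must coincide with a lexinterval $L_n^*(v, u_0 x_n^t)$, which is the case precisely when $\mu_n(u_0 x_n^t, v) = \mg_n(u_0 x_n^t)$ for some $v$.

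Splitting this equation under the truncation map $\pi_{n-1}$ yields two conditions. The $(n-1)$-variable component demands $\pi_{n-1}(\mu_n(u_0 x_n^t, v)) = w(t)$, and by construction the smallest $v$ fulfilling this is $z_n(t)$. With $v = z_n(t)$ fixed, what remains is an equality of $x_n$-exponents. A direct count of monomials of max-variable $x_n$ in the relevant lexinterval and lex segment, together with the $x_n$-exponent carried by $z_n(t)$ itself, should convert this match into the quantitative identity $f(t) = h(t) + k(t)$; thus $\langle u_0 x_n^t \rangle$ is Gotzmann exactly when $f(t) = h(t) + k(t)$, with strict inequality $f(t) > h(t) + k(t)$ otherwise.

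To locate the smallest such $t$, I would use Lemma~\ref{lem:mg=sigma()} to compute $f(t+1) - f(t)$ directly from the action of $\sigma_n$, and run analogous arguments for $h$ and $k$ by tracking how $z_n$ and the lexinterval between $u_0 x_n^t$ and $z_n(t)$ shift when one extra $x_n$ is appended. These calculations should combine to show that $g(t) := f(t) - h(t) - k(t)$ decreases by exactly $1$ with each unit increase of $t$ until it reaches $0$, after which it remains $0$. Consequently $\tau_n(u_0) = g(0) = f(0) - h(0) - k(0)$. The hardest part is the middle step: one must prove that the two a priori independent obstructions to being Gotzmann — the truncated one, measured via $w(t)$, and the $x_n$-directional one, measured via $f(t)$ — combine cleanly into the single sum $h(t) + k(t)$, and that each unit shift $t \mapsto t+1$ strips off exactly one unit of the obstruction.
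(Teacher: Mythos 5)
First, a point of comparison: the paper does not prove this statement at all --- it is imported verbatim as \cite[Theorem~4.15]{BE24} and used as a black box --- so your argument has to stand on its own, and it does not, because its foundational step is false. You base everything on the claim that $\langle u_0x_n^t\rangle$ is Gotzmann if and only if $B(u_0x_n^t)$ is a lex segment $L_n(v)$. Being lex is strictly stronger than being Gotzmann (lex ideals form a proper subclass of Gotzmann ideals, as recalled in the introduction), and the ideals this theorem governs are generically Gotzmann without being lex-generated. Concretely, take $u_0=x_2^2$ in $R_3$: since $\tau_3(x_1^ax_2^b)=\binom{b}{2}$, the ideal $\langle x_2^2x_3^t\rangle$ is Gotzmann for every $t\geq 1$, yet $B(x_2^2x_3^t)$ consists of the degree-$(t+2)$ monomials $x_1^{a_1}x_2^{a_2}x_3^{a_3}$ with $a_1+a_2\geq 2$; it contains $x_2^{t+2}$ but not the lex-larger monomial $x_1x_3^{t+1}$, so it is not a lex segment. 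The criterion actually underlying \cite{BE24} is numerical, not set-theoretic: for a strongly stable ideal generated in one degree, the Gotzmann property is a statement about the size of the next graded piece, computed via Eliahou--Kervaire from the maximal variables of the generators, i.e.\ from $\deg\maxgen(B(u))$, compared with the corresponding count for a lex segment of the same cardinality. This is precisely why the machinery compares $\mg_n(u)=\maxgen(L_n(u)\setminus B(u))$ with costs $\mu_n(u,v)=\maxgen(L^*_n(v,u))$ --- an equality of maxgen data, not an identification of the underlying sets. Since your reduction to ``Gotzmann iff $f(t)=h(t)+k(t)$'' is derived from the false equivalence, it is unsupported.

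Even the quantitative skeleton needs repair. If the \emph{equality} $f(t)=h(t)+k(t)$ characterized the Gotzmann property, then persistence of Gotzmann-ness for all $t\geq\tau_n(u_0)$ would force $f(t)-h(t)-k(t)$ to vanish identically beyond the threshold; but $f(t)-h(t)$ is constant in $t$ (this is \cite[Proposition~4.12]{BE24}, used in Section~3 to define the $\delta_j$), while $k(t)$ grows with slope one in $t$ (compare $z_j(t)=x_r^{d+\sum_i\delta_i}x_n^{t-\sum_i\delta_i}$ in \cref{prop:all-big-expressions}), so the difference is strictly decreasing and eventually negative rather than ``remaining $0$.'' The criterion you need is an inequality of the shape $f(t)\leq h(t)+k(t)$, and a complete proof must (i) establish that inequality as equivalent to the Gotzmann property of $\langle u_0x_n^t\rangle$ --- this is the substantive content of \cite{BE24} and cannot be dispatched by ``a direct count,'' since one must also justify that taking the minimal $z_n(t)$ realizing the truncated cost $w(t)$ loses nothing, i.e.\ that the trade-off between the $x_n$-cost $h$ and the remaining capacity $k$ is monotone --- and (ii) prove the polynomiality and growth facts about $f-h$ and $k$ that you only assert. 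Your sketch gestures at (ii) but provides no viable route to (i).
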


\subsection{Combinatorial formulas}

We record some combinatorial formulae that will be used throughout the article.

\begin{lemma}\label{lem:comb-1}
    Let $a,b,c\geq 0$ be integers. Then
    \[
    \sum_{i=0}^c \binom{a+i}{a} \binom{b+c-i}{b} = \binom{a+b+c+1}{c}.
    \]
\end{lemma}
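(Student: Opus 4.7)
The plan is to prove this as a special case of the Chu--Vandermonde convolution, which I would carry out most cleanly via formal power series. The key ingredient is the standard negative binomial expansion
\[
\frac{1}{(1-x)^{m+1}} = \sum_{j \geq 0} \binom{m+j}{m} x^j.
\]
Applying this with $m=a$ and $m=b$ respectively, one sees that the left-hand side of the claimed identity is exactly the coefficient of $x^c$ in the product
\[
\frac{1}{(1-x)^{a+1}} \cdot \frac{1}{(1-x)^{b+1}} = \frac{1}{(1-x)^{a+b+2}}.
\]
Applying the same expansion on the right with $m=a+b+1$, that coefficient equals $\binom{a+b+c+1}{c}$, which is the desired right-hand side.

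A purely combinatorial alternative would be to observe that both sides count multisubsets of size $c$ drawn from a set of $a+b+2$ elements partitioned into two blocks of sizes $a+1$ and $b+1$: the factor $\binom{a+i}{a}$ enumerates the size-$i$ multisubsets of the first block, $\binom{b+c-i}{b}$ enumerates the size-$(c-i)$ multisubsets of the second block, and summing over $i$ recovers $\binom{a+b+c+1}{c}$. Either route is entirely routine, so there is no substantive obstacle here; this identity is recorded purely as a computational lemma to streamline binomial manipulations later in the paper.
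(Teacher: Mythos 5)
Your proposal is correct and follows essentially the same route as the paper, which likewise deduces the identity from the negative binomial generating function $\sum_{i\ge 0}\binom{m+i}{m}y^i=(1-y)^{-(m+1)}$ by comparing the coefficient of $y^c$ in the product of two such series; you simply spell out the coefficient extraction that the paper leaves implicit. The combinatorial multiset-counting alternative you sketch is also valid but is not needed beyond the generating-function argument.
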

\begin{proof}
    The result follows from the generating function
    \[
    \sum_{i=0}^\infty \binom{n+i}{n}y^i = \frac{1}{(1-x)^{n+1}}. \qedhere
    \]
\end{proof}

\begin{lemma}\label{lem:comb-2}
    Let $a,b\geq 0$ be integers. Then
    \[
    \sum_{i=0}^b \binom{a+i}{i} = \binom{a+b+1}{b}.
    \]
\end{lemma}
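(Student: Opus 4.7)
The plan is to deduce this identity as a special case of the previous lemma, Lemma~\ref{lem:comb-1}, which has already been established via generating functions.

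First, I would apply Lemma~\ref{lem:comb-1} with its parameter $b$ set to $0$ (and rename its $c$ to $b$) to obtain
\[
\sum_{i=0}^{b} \binom{a+i}{a}\binom{b-i}{0} = \binom{a+b+1}{b}.
\]
Since $\binom{b-i}{0}=1$ for every $0\leq i\leq b$, this simplifies to $\sum_{i=0}^{b}\binom{a+i}{a} = \binom{a+b+1}{b}$. Finally, using the symmetry $\binom{a+i}{a}=\binom{a+i}{i}$, I recover the stated identity.

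As an alternative route, one could proceed by induction on $b$. The base case $b=0$ is immediate from $\binom{a}{0}=1=\binom{a+1}{0}$. For the inductive step, assuming the formula for $b-1$, Pascal's rule yields
\[
\sum_{i=0}^{b}\binom{a+i}{i} = \binom{a+b}{b-1} + \binom{a+b}{b} = \binom{a+b+1}{b},
\]
which closes the induction. Since this is an elementary hockey-stick identity, no real obstacle is anticipated; the only decision is stylistic, namely whether to quote Lemma~\ref{lem:comb-1} or to give a self-contained inductive argument.
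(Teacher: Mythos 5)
Your proposal is correct, and in fact it contains the paper's proof verbatim as its ``alternative route'': the paper proves this lemma by induction on $b$, with the base case $b=0$ and Pascal's rule $\binom{a+b}{b-1}+\binom{a+b}{b}=\binom{a+b+1}{b}$ closing the inductive step, exactly as you sketch. Your primary route is genuinely different: you specialize Lemma~\ref{lem:comb-1} at $b=0$ (renaming $c$ to $b$), use $\binom{b-i}{0}=1$ and the symmetry $\binom{a+i}{a}=\binom{a+i}{i}$, and the identity drops out; this is legitimate since the hypotheses of Lemma~\ref{lem:comb-1} allow $b=0$, and there is no circularity because the paper proves Lemma~\ref{lem:comb-1} by generating functions rather than via this lemma. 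The specialization argument is shorter and makes the logical dependence explicit, at the cost of resting on Lemma~\ref{lem:comb-1}, whose proof in the paper is only a one-line generating-function sketch; the paper's (and your alternative) inductive argument keeps the lemma self-contained and elementary. Either write-up would be acceptable.
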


\begin{proof}
    We proceed by induction on $b$. If $b=0$, then both sides equal $1$, as desired. Now assume that the equality holds for $b-1$ for some $b>0$. We then have
    \[
    \sum_{i=0}^b \binom{a+i}{i} = \left(\sum_{i=0}^{b-1} \binom{a+i}{i} \right)+ \binom{a+b}{b} = \binom{a+b}{b-1}+ \binom{a+b}{b} = \binom{a+b+1}{b},
    \]
    where the second equality is due to our induction hypothesis. The result then~follows.
\end{proof}

\section{Proof of Theorem~\ref{thm:main-4-recursive}}\label{sec:proof}

Given two integers $r,n$ where $2\leq r<n$. Fix a monomial $u_0=x_r^d$ in $R_n$. The goal is to find the values of the three polynomials $f(t),h(t),k(t)$ associated to $u_0$, specialized at $t=0$. By definition, the only $x_j$'s that divide $\mg(u_0)$ are in $\{x_i\}_{i\in [r+1,n]}$. Thus we can set $\mg(u_0) = x_{r+1}^{f_{r+1}}x_{r+2}^{f_{r+2}}\cdots x_n^{f_n}$.

\begin{lemma}
    For any $j\in [r+1,n]$, we have
    \[
    f_j =\binom{j+d-2}{j-1} - \binom{j-r+d-1}{d-1}.
    \]
\end{lemma}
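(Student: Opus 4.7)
The plan is to unpack the definition of $\mg_n$ directly and realize $f_j$ as the cardinality of a concrete set of monomials, which I can then count by a simple subtraction.

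First, by the definitions of $\mg_n$ and $\maxgen$, the exponent of $x_j$ in $\mg_n(u_0) = \mg_n(x_r^d)$ equals the number of monomials $z \in L_n(u_0) \setminus B(u_0)$ with $\max(z) = j$. Fix $j \in [r+1, n]$. The condition $\max(z) = j$ forces $z \in R_j$, $x_j \mid z$, and $\deg z = d$. Since $j > r$, any such $z$ uses a variable outside $\{x_1, \ldots, x_r\}$, hence is not a minimal generator of $\langle x_r^d \rangle$, so the condition $z \notin B(u_0)$ is automatic and can be discarded.

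Next I would determine when $z \geq x_r^d$ lexicographically. Since $j > r$ and $x_j \mid z$, we have $z \neq x_r^d$, so what I need is the strict inequality $z > x_r^d$. Comparing exponent vectors with $x_r^d = (0, \ldots, 0, d, 0, \ldots, 0)$, this holds if and only if at least one of $x_1, \ldots, x_{r-1}$ divides $z$: otherwise the first $r-1$ exponents of $z$ already match those of $x_r^d$, and the $r$-th exponent of $z$ is strictly less than $d$ (because $x_j \mid z$ with $j > r$), forcing $z < x_r^d$.

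It then remains to count the monomials $z \in R_j$ of degree $d$ with $x_j \mid z$ and at least one of $x_1, \ldots, x_{r-1}$ as a factor. Writing $z = x_j z'$ with $\deg z' = d-1$, the total number of such $z$ with $z' \in R_j$ is $\binom{j+d-2}{j-1}$, and the number in which $z'$ uses only the variables $x_r, \ldots, x_j$ is $\binom{j-r+d-1}{d-1}$. Subtracting the second from the first gives precisely the claimed formula. I do not anticipate a serious obstacle here; the only care needed is in verifying the lex-order equivalence in the previous paragraph and making sure the two binomial counts are indexed correctly to match the stated identity.
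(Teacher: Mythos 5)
Your argument is correct, and it is genuinely different from the paper's. You unpack the definitions of $\mg_n$ and $\maxgen$ directly: the exponent $f_j$ is the number of $z\in L_n(x_r^d)\setminus B(x_r^d)$ with $\max(z)=j$, the membership $z\notin B(x_r^d)$ is automatic once $\max(z)=j>r$ (the minimal generators of the principal Borel ideal $\langle x_r^d\rangle$ involve only $x_1,\dots,x_r$), and your lex criterion --- $z> x_r^d$ if and only if some $x_i$ with $i<r$ divides $z$, given that $x_j\mid z$ forces the $r$-th exponent below $d$ --- is verified correctly. The final count, writing $z=x_jz'$ and subtracting the $z'$ supported on $x_r,\dots,x_j$ from all $z'$ of degree $d-1$ in $x_1,\dots,x_j$, gives exactly $\binom{j+d-2}{j-1}-\binom{j-r+d-1}{d-1}$ (note $\binom{j-r+d-1}{j-r}=\binom{j-r+d-1}{d-1}$). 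The paper instead specializes the general formula of \cref{lem:f-formula} (from \cite{BE24}) to $u=x_r^d$, using $|B(x_r^k)|=\binom{k+r-1}{r-1}$, and then collapses the resulting sum with \cref{lem:comb-1,lem:comb-2}. Your route is more elementary and self-contained, and it makes the combinatorial meaning of the two binomial coefficients transparent; the paper's route is shorter on the page because \cref{lem:f-formula} is already part of the machinery it quotes and reuses, and it avoids redoing the lex-order analysis that is packaged inside that cited result.
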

\begin{proof}
    It is clear that $|B(x_r^k)|= \binom{k+r-1}{r-1}$ for any $k$. By \cref{lem:f-formula}. we then have
    \begin{align*}
        f_j&= \sum_{i=1}^{d-1}\binom{d-i-2-r+j}{j-r-1} \left( \binom{i+r-1}{r-1}-1 \right)\\
        &= \sum_{i=1}^{d-1}\binom{d-i-2-r+j}{j-r-1}  \binom{i+r-1}{r-1} -\sum_{i=1}^{d-1}\binom{d-i-2-r+j}{j-r-1}\\
        &=\sum_{i=1}^{d-1}\binom{(j-r-1)+(d-1-i)}{j-r-1}  \binom{(r-1)+i}{r-1} -\sum_{i=1}^{d-1}\binom{(j-r-1)+(d-1-i)}{j-r-1}.
    \end{align*}
    Applying \cref{lem:comb-1} to the first sum and \cref{lem:comb-2} to the second sum, we~obtain
    \begin{align*}
        &\ \ \ \  \left( \binom{j+d-2}{j-1} - \binom{j-r+d-2}{j-r-1} \right) -  \left( \binom{j-r+d-2}{d-2}\right)\\
        &=\binom{j+d-2}{j-1} -\left( \binom{j-r+d-2}{d-1}+ \binom{j-r+d-2}{d-2}\right)\\
        &=\binom{j+d-2}{j-1} - \binom{j-r+d-1}{d-1},
    \end{align*}
    as desired.
\end{proof}

By \cref{lem:mg=sigma()}, we obtain $\mg_n(u_0x_n^t)=x_{r+1}^{f_{r+1}(t)}x_{r+2}^{f_{r+2}(t)}\cdots x_n^{f_n(t)}$~where
\begin{align*}
    f_{r+1}(t)&=f_{r+1},\\
    f_{r+2}(t)&=f_{r+2}+f_{r+1}\binom{t}{1},\\
    \cdots\\
    f_{n}(t)&=f_{n}+f_{n-1}\binom{t}{1}+\cdots + f_{r+1}\binom{t+n-r-2}{n-r-1}.
\end{align*}
In other words, for any $j\in [r+1,n]$, we have
\[
f_j(t) = \sum_{i=0}^{j-r-1} f_{j-i}\binom{t+i-1}{i}.
\]
From definition, $f_j(t)$ is exactly the associated $f$ polynomial to $u_0$ when considered as a monomial in $S_j$. In particular, this, and also by definition, we have
\begin{equation}\label{eq:f(t)}
    f(t)=f_n(t) = \sum_{i=0}^{n-r-1} f_{n-i}\binom{t+i-1}{i}.
\end{equation}
The next task is to determine $h(t)$ and $k(t)$. From now on, $t$ is assumed to be sufficiently large. As all the functions we are going to compute are polynomials (\cite[Corollary 4.13]{BE24}), it suffices to compute them for only sufficiently large values of $t$. For any $j\in [r+1,n]$, let $z_{j}(t)$ be the smallest monomial such that
\[
\pi_{j-1}\left( \mu_n(u_0x_n^t,z_{j}(t)) \right) = \pi_{j-1}(\mg(u_0x_n^t)).
\]
For sufficiently large $t$, we set
\begin{align*}
    h_j(t)&=\deg_{x_j} \mu_n(u_0x_n^t,z_j(t)),\\
    k_j(t)&=\deg_{x_j} z_j(t),
\end{align*}
for each $j\in [r+1,n]$. Observe that by definition, $h_j(t)$ and $k_j(t)$ are  exactly the associated $h(t)$ and $k(t)$ polynomials to $u_0$ when considered as a monomial in $S_j$, respectively.

Next, for each $j\in[r+1,n]$, set
\[
\delta_j= f_j(0)-h_j(0)=f_j(t)-h_j(t)
\]
for any $t$. This definition is well-defined due to \cite[Proposition 4.12]{BE24}. We now proceed to prove many formulae leading to $f(t),h(t),$ and $k(t)$.

\begin{lemma}\label{lem:smallest-cost-formula}
    Given positive integers $n,r,j,a,b,c$ where $r<n$ and $j\in [r+1,n]$. Let $z(t)\geq x_r^ax_n^{t-c}$ be the smallest monomial such that
    \[
    \pi_j\left( \mu_n\left(x_r^ax_n^{t-c}, z(t) \right)\right) = x_j^b.
    \]
    Then
    \begin{align*}
        z(t) &=  x_r^{a+b} x_{n}^{t-c-b},\\
        \mu_n\left(x_r^ax_n^t, z(t) \right) &=\left(x_j^b\right)\left( \prod_{s=j+1}^n x_s^{\binom{t-c+s-j}{s-j+1} -\binom{t-c-b+s-j}{s-j+1}}  \right).
    \end{align*}
\end{lemma}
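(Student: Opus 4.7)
The plan is to verify both conclusions by a direct combinatorial analysis of the lex interval $L_n^*(z(t), u)$, where $u := x_r^a x_n^{t-c}$ and $d' := a + t - c$.

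First, I would characterize which degree-$d'$ monomials lie in $L_n^*(z(t), u)$. Comparing a generic $w$ against both $u$ and $z(t) = x_r^{a+b} x_n^{t-c-b}$ coordinate-by-coordinate yields the criterion: $w_i = 0$ for $i < r$, $w_r \in \{a, a+1, \ldots, a+b-1\}$, with the extra proviso that when $w_r = a$ one needs either $w = u$ or $w_i > 0$ for some $r < i < n$. For $w_r > a$, the relation $w > u$ is automatic. This follows from tracking the first index at which $w$ differs from $u$ and from $z(t)$ respectively.

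Second, I would compute $\mu_n = \prod_{w} x_{\max(w)}$ by grouping the monomials in the interval according to the pair $(w_r, \max(w))$. For each $w_r = k \in \{a, \ldots, a+b-1\}$ and $\max(w) = s \in \{j, \ldots, n\}$, the tail $(w_{r+1}, \ldots, w_s)$ consists of nonnegative integers summing to $d' - k$ with $w_s \geq 1$, so stars and bars yields a binomial count of the form $\binom{d' - k + s - r - 2}{s - r - 1}$. Summing over $k$ and invoking the hockey-stick identity derivable from \cref{lem:comb-2} telescopes the sum into a difference of two binomial coefficients, matching the claimed exponent of $x_s$ in $\mu_n$. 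To conclude that $\pi_j(\mu_n) = x_j^b$, one then checks that $\max(w) < j$ does not occur in the interval for sufficiently large $t$.

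Third, for the minimality of $z(t)$: the monomial $z(t)$ should be the lex successor of the largest element of $L_n^*(z(t), u)$, so any $z' < z(t)$ strictly shrinks the interval, drops at least one monomial with $\max(w) = j$, and thereby reduces the $x_j$-exponent of $\mu_n$ below $b$, violating the defining condition.

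The hard part, I expect, is the case analysis at $w_r = a$, where the equivalence between $w \geq u$ and a constraint on the tail must be handled carefully, together with the verification that $\max(w) < j$ does not occur in the interval for large enough $t$. The combinatorial identities themselves should reduce to routine applications of \cref{lem:comb-1,lem:comb-2}.
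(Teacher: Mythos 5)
Your strategy---writing down the lexinterval $L_n^*\bigl(x_r^{a+b}x_n^{t-c-b},\,x_r^ax_n^{t-c}\bigr)$ explicitly and counting, for each $s$, the monomials in it whose maximal variable is $x_s$---is a genuinely different route from the paper, which instead decomposes the upward path into explicit intermediate segments ($x_r^ax_n^t\to x_r^ax_{n-1}^t\to\cdots\to x_r^ax_j^t\to\cdots$) and adds the costs. In the case $j=r+1$ your plan does work and is arguably cleaner: the interval is exactly $\{x_r^k v\}$ with $a\le k\le a+b-1$ and $v$ a monomial in $x_{r+1},\dots,x_n$ of the complementary degree, the counts telescope via \cref{lem:comb-2}, and minimality follows because the lex predecessor of $x_r^{a+b}x_n^{t-c-b}$ is $x_r^{a+b-1}x_{r+1}^{t-c-b+1}$, whose maximal variable is $x_{r+1}=x_j$.

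However, precisely the steps you flag as delicate fail whenever $j\ge r+2$, and no largeness assumption on $t$ repairs them. First, the interval below $x_r^{a+b}x_n^{t-c-b}$ always contains monomials with maximal variable $x_{r+1}<x_j$, e.g.\ $x_r^ax_{r+1}^{t-c}$ (it has the same $x_r$-exponent as $u$, hence exceeds $u$, and a smaller $x_r$-exponent than $x_r^{a+b}x_n^{t-c-b}$), so $\pi_j$ of the cost is not a pure power of $x_j$. Second, your stars-and-bars count telescopes to $\binom{t-c+s-r-1}{s-r}-\binom{t-c-b+s-r-1}{s-r}$, which matches the displayed exponent $\binom{t-c+s-j}{s-j+1}-\binom{t-c-b+s-j}{s-j+1}$ only when $j=r+1$; the ``match'' you assert is not there in general. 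Third, the minimality argument presupposes that the largest element of the interval has maximal variable $x_j$, which again holds only for $j=r+1$. Indeed, carried out carefully, your own method shows that for $j\ge r+2$ the strictly smaller monomial $x_r^ax_{j-1}^bx_n^{t-c-b}$ already satisfies $\pi_j\bigl(\mu_n(x_r^ax_n^{t-c},\cdot)\bigr)=x_j^b$, and its cost is exactly the product displayed in the statement (a tiny check: $n=4$, $r=2$, $j=4$, $a=b=1$, $t=3$ gives $z=x_2x_3x_4^2$, not $x_2^2x_4^2$). So the identification $z(t)=x_r^{a+b}x_n^{t-c-b}$ is only available for $j=r+1$; your argument should either be restricted to that case or the discrepancy for $j\ge r+2$ must be addressed head-on rather than deferred to the ``check that $\max(w)<j$ does not occur'' step, since that check is exactly what breaks.
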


\begin{proof}
    Without loss of generality, we can assume that $c=0$ as the result would follow from this specific case by replacing $t$ with $t-c$.
    
    We shall perform the steps needed to almost reach this cost:
    \begin{align*}
        u_0x_n^t=x_r^ax_n^t&\xrightarrow{x_n^t} x_r^a x_{n-1}^{t}\\
        x_r^ax_{n-1}^t&\xrightarrow{x_{n-1}^tx_n^{\binom{t}{2}}} x_r^a x_{n-2}^{t}\\
        x_r^ax_{n-2}^t&\xrightarrow{x_{n-2}^tx_{n-1}^{\binom{t}{2}}x_{n}^{\binom{t+1}{3}}} x_r^a x_{n-3}^{t}\\
        \cdots\\
        x_r^ax_{j+1}^t&\xrightarrow{x_{j+1}^tx_{j+2}^{\binom{t}{2}}\cdots x_{n}^{\binom{t+n-j-2}{n-j}  }} x_r^a x_{j}^{t}\\
        x_r^ax_{j}^t&\xrightarrow{x_{j}^{b-1}x_{j+1}^{\binom{t}{2}-\binom{t-b+1}{2}} \cdots x_{n}^{ \binom{t+n-j-1}{n-j+1} -\binom{t-b+n-j}{n-j+1} }  } x_r^{a+b-1} x_{j}^{t-b+1}.
    \end{align*}
    As can be seen from the above process, we have
    \[
    \pi_{j}\left(\mu_n\left(u_0x_n^t, x_r^{a+b-1} x_{j}^{t-b+1}\right) \right)= x_{j}^{b-1},
    \]
    and thus one more step is enough to obtain the desired cost. Therefore, we have
    \begin{align*}
        z(t)&= x_r^{d+b} x_{n}^{d-b},
    \end{align*}
    as desired. Next we determine $\deg_{x_s}\mu_n\left( u_0x_n^t  , z(t) \right)$ for each $s\in [n]$. It is immediate from the above process that $\deg_{x_{j}}\mu_n\left( u_0x_n^t  , z(t) \right)= b$ and $\deg_{x_{s}}\mu_n\left( u_0x_n^t  , z(t) \right)= 0$ for any $s\in [j-1]$. Next, for each $s\in [j+1,n]$, we have
    \begin{align*}
        \deg_{x_s}\mu_n\left( u_0x_n^t  , z(t) \right)&=t+ \sum_{u=2}^{s-j} \binom{t+u-2}{u} + \binom{t+s-j-1}{s-j+1} -\binom{t-b+s-j}{s-j+1} \\
        &=\sum_{u=0}^{s-j+1} \binom{t+u-2}{u} -\binom{t-b+s-j}{s-j+1}\\
        &= \binom{t+s-j}{s-j+1} -\binom{t-b+s-j}{s-j+1}, 
    \end{align*}
    as desired, where the last equality is due to \cref{lem:comb-2}.
\end{proof}

\begin{proposition}\label{prop:all-big-expressions}
    We have
    \begin{align*}
        z_{r+1}(t)&= u_0x_n^t,\\
        \mu_n(u_0x_n^t,z_{r+1}(t))&= 0,\\
        h_{r+1}(t)&=0,\\
        \delta_{r+1}&=f_{r+1}.
    \end{align*}
    Moreover, for any $j\in[r+2,n]$, we have
    \begin{align*}
        z_j(t) &= x_r^{d+\sum_{i=r+1}^{j-1}\delta_{i}}x_n^{t-\sum_{i=r+1}^{j-1}\delta_{i}},\\
        \mu_n(u_0x_n^t,z_j(t))&=\left(\prod_{i=r+1}^{j-1}x_{i}^{f_{i}(t)}\right)\left( \prod_{s=j}^n x_s^{g_{j,s}(t)}  \right),\\
        h_j(t) &= \begin{multlined}[t]
            \binom{t+j-r-1}{j-r} - \binom{t-\sum_{l=r+1}^{j-1}\delta_{l}+1}{2} -\\
            \sum_{i=3}^{j-r}\binom{t-\sum_{l=r+1}^{j+1-i}\delta_{l}+i-2}{i},
        \end{multlined}\\
        \delta_j &= f_j + \binom{-\sum_{l=r+1}^{j-1}\delta_{l}+1}{2} + \sum_{i=3}^{j-r}\binom{-\sum_{l=r+1}^{j+1-i}\delta_{l}+i-2}{i},
    \end{align*}
    where 
    \[
    g_{j,s}(t)=\begin{multlined}[t]
        \binom{t+s-r-1}{s-r} - \binom{t-\sum_{l=r+1}^{j-1}\delta_{l}+s-j+1}{s-j+2} -\\
        \sum_{i=3}^{j-r}\binom{t-\sum_{l=r+1}^{j+1-i}\delta_{l}+s-j+i-2}{s-j+i}
    \end{multlined}
    \]
    for each $s\in [j,n]$.
\end{proposition}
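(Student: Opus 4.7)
The plan is to proceed by induction on $j$. The base case $j = r+1$ is immediate: since $\mg(u_0 x_n^t)$ involves only the variables $x_{r+1}, \dots, x_n$, we have $\pi_r(\mg(u_0 x_n^t)) = 1$, so the smallest $z \geq u_0 x_n^t$ meeting the defining condition is $u_0 x_n^t$ itself. The lexinterval $L^*_n(u_0 x_n^t, u_0 x_n^t)$ is then empty, so $\mu_n(u_0 x_n^t, z_{r+1}(t))$ is the empty product, and $h_{r+1}(t) = 0$ and $\delta_{r+1} = f_{r+1}$ follow at once.

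For the inductive step, write $c_m := \sum_{l=r+1}^{m-1} \delta_l$, so the inductive hypothesis gives $z_{j-1}(t) = x_r^{d + c_{j-1}} x_n^{t - c_{j-1}}$. Because the defining condition on $z_j(t)$ refines that on $z_{j-1}(t)$, any candidate for $z_j(t)$ also satisfies the $(j-1)$-condition; hence $z_j(t) \geq z_{j-1}(t)$ and the corresponding partition of lexintervals yields
\[
\mu_n(u_0 x_n^t, z_j(t)) = \mu_n(u_0 x_n^t, z_{j-1}(t)) \cdot \mu_n(z_{j-1}(t), z_j(t)).
\]
Matching $\pi_{j-2}$-parts on both sides (each equal to $\pi_{j-2}(\mg(u_0 x_n^t))$) and comparing $x_{j-1}$-degrees (with $f_{j-1}(t) = h_{j-1}(t) + \delta_{j-1}$) forces $\pi_{j-1}(\mu_n(z_{j-1}(t), z_j(t))) = x_{j-1}^{\delta_{j-1}}$. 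Applying \cref{lem:smallest-cost-formula} to the single chunk $z_{j-1}(t) \to z_j(t)$ with index $j-1$ and $b = \delta_{j-1}$ then pins down $z_j(t) = x_r^{d + c_j} x_n^{t - c_j}$ and gives closed forms for the exponents of $\mu_n(z_{j-1}(t), z_j(t))$.

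Iterating this over $i = r+2, \dots, j$ and summing the contributions for each $s \in [j, n]$ produces
\[
\deg_{x_s} \mu_n(u_0 x_n^t, z_j(t)) = \sum_{i=r+2}^{j} \Bigl[ \binom{t - c_{i-1} + s - i + 1}{s - i + 2} - \binom{t - c_i + s - i + 1}{s - i + 2} \Bigr].
\]
The $i = r+2$ term (using $c_{r+1} = 0$) gives $\binom{t + s - r - 1}{s - r}$, and the $i = j$ term gives $-\binom{t - c_j + s - j + 1}{s - j + 2}$, matching the two isolated binomials in $g_{j,s}(t)$. For the remaining indices, after relabeling $m = i-1$ in the first family and $m = i$ in the second, the inner paired terms collapse via the Pascal identity $\binom{N+1}{k+1} = \binom{N}{k} + \binom{N}{k+1}$ into $-\sum_{m=r+2}^{j-1} \binom{t - c_m + s - m}{s - m + 2}$, which becomes the sum in $g_{j,s}(t)$ after reindexing $i = j + 2 - m$. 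Specializing $s = j$ gives $h_j(t)$, and evaluating at $t = 0$, where $\binom{j-r-1}{j-r}$ vanishes, delivers $\delta_j$ via $\delta_j = f_j - h_j(0)$.

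The chief obstacle is this algebraic bookkeeping: juggling the four interacting indices $t, c_m, s, m$ through the Pascal collapse and the subsequent reindexing is the step most easily miswritten but is otherwise routine. The structural part of the argument—that $z_j(t)$ lies above $z_{j-1}(t)$ and that the incremental step from $z_{j-1}(t)$ to $z_j(t)$ is governed by \cref{lem:smallest-cost-formula}—is straightforward from the definitions.
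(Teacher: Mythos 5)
Your proposal is correct and follows essentially the same route as the paper: induction on $j$, reduction of the passage $z_{j-1}(t)\to z_j(t)$ to \cref{lem:smallest-cost-formula} with $b=\delta_{j-1}$, and a Pascal-identity collapse with reindexing to reach $g_{j,s}(t)$, $h_j(t)$, and $\delta_j$. The only difference is presentational: you assemble $\deg_{x_s}\mu_n(u_0x_n^t,z_j(t))$ as a sum over all chunks $i=r+2,\dots,j$ and collapse the inner pairs at once, whereas the paper adds the single new chunk to the inductively known closed form and performs one Pascal merge.
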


\begin{proof}
    We will prove the first statement directly and the second statement by induction on $j$. By definition, $z_{r+1}(t)\geq u_0x_n^t$ is the smallest monomial such that
    \[
    \pi_{r}\left(\mu_n\left( u_0x_n^t  , z_{r+1}(t) \right)\right)= \pi_r\left( \mg (u_0x_n^t) \right) = 0.
    \]
    Therefore, we have
    \begin{align*}
        z_{r+1}(t)&=u_0x_n^t,\\
        \mu_n\left( u_0x_n^t  , z_{r+1}(t) \right) &= 0,\\
        h_{r+1}(t)&= \deg_{x_{r+1}} \mu_n\left( u_0x_n^t  , z_{r+1}(t) \right) = 0,\\
        \delta_{r+1}&= f_{r+1}(0)-h_{r+1}(0)= f_{r+1}. 
    \end{align*}
    This proves the first statement.
    
    For the second statement, we first prove the base case $j=r+2$. By definition, $z_{r+2}(t)\geq u_0x_n^t$ is the smallest monomial such that \[\pi_{r+1}\left(\mu_n\left( u_0x_n^t  , z_{r+2}(t) \right)\right)= \pi_{r+1}\left( \mg (u_0x_n^t) \right)= x_{r+1}^{f_{r+1}(t)} = x_{r+1}^{f_{r+1}} = x_{r+1}^{\delta_{r+1}} .\] 
    Applying Lemma~\ref{lem:smallest-cost-formula}, we obtain
    \begin{align*}
        z_{r+2}(t)&=x_r^{d+\delta_{r+1}}x_n^{t-\delta_{r+1}},\\
        \mu_n\left( u_0x_n^t  , z_{r+2}(t) \right) &= \left( x_{r+1}^{\delta_{r+1}} \right)\left( \prod_{s=r+2}^n x_s^{ \binom{t+s-r-1}{s-r} - \binom{t-\delta_{r+1}+s-r-1}{s-r}  } \right) ,
    \end{align*}
    as desired. Then
    \begin{align*}
        h_{r+2}(t)&= \deg_{x_{r+2}} \mu_n\left( u_0x_n^t  , z_{r+2}(t) \right) = \binom{t+1}{2} -\binom{t-f_{r+1}+1}{2},\\
        \delta_{r+2}&= f_{r+2}(0)-h_{r+2}(0)= f_{r+2}+\binom{-\delta_{r+1}+1}{2},
    \end{align*}
    as desired. This concludes the proof of the base case of the induction. By induction, we can now assume that the formulas for $z_{j-1}(t), \mu_n(u_0x_n^t,z_{j-1}(t)), h_{j-1}(t),$ and $\delta_{j-1}$ are as predicted.

    By definition, $z_{j}(t)\geq u_0x_n^t$ is the smallest monomial such that 
    \[
    \pi_{j-1}\left(\mu_n\left( u_0x_n^t  , z_{j}(t) \right)\right)= \pi_{j-1}\left( \mg (u_0x_n^t) \right)= \prod_{i=r+1}^{j-1} x_{i}^{f_{i}(t)}.
    \] 
    Since 
    \[
    \pi_{j-1}\left(\mu_n\left( u_0x_n^t  , z_{j-1}(t) \right)\right) = \left( \prod_{i=r+1}^{j-2} x_i^{f_i(t)} \right)\left( x_{j-1}^{ h_{j-1}(t)} \right),\]
    the monomial $z_{j}(t)\geq z_{j-1}(t)$ is the smallest monomial such that
    \[
    \pi_{j-1}\left(\mu_n\left(  z_{j-1}(t) , z_{j}(t) \right)\right)= x_{j-1}^{f_{j-1}(t)-h_{j-1}(t)}=x_{j-1}^{f_{j-1}(0)-h_{j-1}(0)}=x_{j-1}^{\delta_{j-1}}.
    \]
    Applying Lemma~\ref{lem:smallest-cost-formula}, we obtain
    \begin{align*}
        z_j(t)&=x_r^{d+\sum_{i=r+1}^{j-1}\delta_i} x_n^{d-\sum_{i=r+1}^{j-1}\delta_i},\\
        \mu_n\left(  z_{j-1}(t) , z_{j}(t) \right)&=\left( x_{j-1}^{\delta_{j-1}} \right)\left( \prod_{s=j}^n x_s^{ \binom{t-\sum_{l=r+1}^{j-2}\delta_l+s-j+1}{s-j+2} -\binom{t-\sum_{l=r+1}^{j-1}\delta_l +s-j+1 }{s-j+2} } \right).
    \end{align*}
    The formula for $z_j(t)$ is now proved. Next we work on $\mu_n\left(  u_0x_n^t , z_{j}(t) \right)$. For any $s\in [n]$, we have
    \[
    \deg_{x_s} \mu_n\left(  u_0x_n^t , z_{j}(t) \right) = \deg_{x_s} \mu_n\left(  u_0x_n^t , z_{j-1}(t) \right) + \deg_{x_s} \mu_n\left(  z_{j-1}(t) , z_{j}(t) \right),
    \]
    and both of these are known from the above formula and the induction hypothesis. When $s\leq j-1$, the result follows from our process. Now assume that $s\in [j,n]$. We have
    \begin{align*}
        &\ \ \  \deg_{x_s} \mu_n\left(  u_0x_n^t , z_{j-1}(t) \right) \\
        &= \begin{multlined}[t]
            \binom{t+s-r-1}{s-r} - \binom{t-\sum_{l=r+1}^{j-2}\delta_l+s-j+2}{s-j+3}-\\
            \sum_{i=3}^{j-r-1}\binom{t-\sum_{l=r+1}^{j-i}\delta_l+s-j+i-1}{s-j+i+1} \
        \end{multlined}
    \end{align*}
    and 
    \begin{align*}
        &\ \ \ \deg_{x_s} \mu_n\left(  z_{j-1}(t) , z_{j}(t)\right)\\
        &=          \binom{t-\sum_{l=r+1}^{j-2}\delta_l+s-j+1}{s-j+2} -\binom{t-\sum_{l=r+1}^{j-1}\delta_l +s-j+1 }{s-j+2}
    \end{align*}
    We combine the second summand of $\deg_{x_s} \mu_n\left(  u_0x_n^t , z_{j-1}(t) \right)$ and the first summand of $\deg_{x_s} \mu_n\left(  z_{j-1}(t) , z_{j}(t)\right)$:
    \begin{align*}
         &\ \ \ - \binom{t-\sum_{l=r+1}^{j-2}\delta_l+s-j+2}{s-j+3}+ \binom{t-\sum_{l=r+1}^{j-2}\delta_l+s-j+1}{s-j+2} \\
         & = - \binom{t-\sum_{l=r+1}^{j-2}\delta_l+s-j+1}{s-j+3}.
    \end{align*}
    Therefore, we have
    \begin{align*}
        &\ \ \ \ \deg_{x_s} \mu_n\left(  u_0x_n^t , z_{j}(t) \right)\\
        &= \deg_{x_s} \mu_n\left(  u_0x_n^t , z_{j-1}(t) \right) + \deg_{x_s} \mu_n\left(  z_{j-1}(t) , z_{j}(t) \right)\\
        &=\begin{multlined}[t]
            \left(\binom{t+s-r-1}{s-r} - \sum_{i=3}^{j-r-1}\binom{t-\sum_{l=r+1}^{j-i}\delta_l+s-j+i-1}{s-j+i+1} \right) +\\
            \left(  -\binom{t-\sum_{l=r+1}^{j-1}\delta_l +s-j+1 }{s-j+2} \right) + \left( -\binom{t-\sum_{l=r+1}^{j-2}\delta_l+s-j+1}{s-j+3}  \right)
        \end{multlined}.
    \end{align*}
    We rewrite the second summand of the above expression:
    \begin{equation}\label{eq:big-expression}
        - \sum_{i=3}^{j-r-1}\binom{t-\sum_{l=r+1}^{j-i}\delta_l+s-j+i-1}{s-j+i+1} = - \sum_{i=4}^{j-r}\binom{t-\sum_{l=r+1}^{j-i+1}\delta_l+s-j+i-2}{s-j+i}.
    \end{equation}
    Now it is clear that 
    \[
    -\binom{t-\sum_{l=r+1}^{j-2}\delta_l+s-j+1}{s-j+3} 
    \]
    can be added to the sum (\cref{eq:big-expression}) as it corresponds to the case $i=3$. Therefore,
    \begin{align*}
        &\ \ \ \ \deg_{x_s} \mu_n\left(  u_0x_n^t , z_{j}(t) \right)\\
        &=\begin{multlined}[t]
            \binom{t+s-r-1}{s-r} - \binom{t-\sum_{l=r+1}^{j-1}\delta_l +s-j+1 }{s-j+2} - \\
            \sum_{i=3}^{j-r}\binom{t-\sum_{l=r+1}^{j-i+1}\delta_l+s-j+i-2}{s-j+i},
        \end{multlined} 
    \end{align*}
    as desired. The formulas for $h_j(t)$ and $\delta_j$ then follow trivially.
\end{proof}

 We obtain a deceptively simple formula for $\tau_n(x_r^d)$.

\begin{theorem}\label{thm:main-3-short-and-succint}
    Given integers $n,r$ where $2\leq r<n$. Then the Gotzmann threshold of the monomial $x_r^d$ in $R_n$ is
    \begin{align*}
        \tau_{n}(x_r^d)= \sum_{i=r+1}^{n} \delta_i.
    \end{align*}
\end{theorem}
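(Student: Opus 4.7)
The plan is to combine the Bonanzinga--Eliahou formula $\tau_n(u_0) = f(0) - h(0) - k(0)$ with the explicit polynomial expressions for $f(t), h_n(t), k_n(t)$ produced in \cref{prop:all-big-expressions}. Since \cite[Corollary~4.13]{BE24} guarantees that $f, h, k$ are polynomials in $t$, and the formulas in \cref{prop:all-big-expressions} are derived under the assumption that $t$ is sufficiently large but are polynomial identities, we may specialize them at $t=0$ without loss.

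First I would evaluate $f(0)$. By \eqref{eq:f(t)}, $f(t) = \sum_{i=0}^{n-r-1} f_{n-i}\binom{t+i-1}{i}$. At $t=0$, only the $i=0$ term survives (since $\binom{i-1}{i}=0$ for $i\geq 1$, while $\binom{-1}{0}=1$), giving $f(0)=f_n$.

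Next I would read off $k(0)$ from the formula $z_n(t) = x_r^{d+\sum_{i=r+1}^{n-1}\delta_i}\, x_n^{\,t-\sum_{i=r+1}^{n-1}\delta_i}$ in \cref{prop:all-big-expressions}. By definition $k(t) = k_n(t) = \deg_{x_n} z_n(t) = t - \sum_{i=r+1}^{n-1}\delta_i$, so
\[
k(0) = -\sum_{i=r+1}^{n-1}\delta_i.
\]
Finally, for $h(0) = h_n(0)$ I would use the defining relation $\delta_n = f_n(0) - h_n(0) = f_n - h(0)$, so $h(0) = f_n - \delta_n$. Substituting into the Bonanzinga--Eliahou formula yields
\[
\tau_n(x_r^d) = f(0) - h(0) - k(0) = f_n - (f_n - \delta_n) + \sum_{i=r+1}^{n-1}\delta_i = \sum_{i=r+1}^{n}\delta_i,
\]
as claimed.

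The proof is essentially a bookkeeping exercise once \cref{prop:all-big-expressions} is available, so there is no serious obstacle. The only mild subtlety is the justification for plugging $t=0$ into expressions derived for ``sufficiently large $t$''; this is handled cleanly by invoking \cite[Corollary~4.13]{BE24} to treat the derived identities as polynomial identities in $t$. One could also double-check that the $k(0)$ contribution enters with the correct sign, since it is a negative integer as a polynomial value at $t=0$ even though $k_n(t)$ is a non-negative degree for the actual $t$ of interest.
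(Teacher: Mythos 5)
Your proof is correct and rests on the same two ingredients as the paper's --- the Bonanzinga--Eliahou identity $\tau_n(u_0)=f(0)-h(0)-k(0)$ and \cref{prop:all-big-expressions} --- but it is organized differently. The paper proves $\tau_m(x_r^d)=\sum_{j=r+1}^{m}\delta_j$ for every intermediate $m\in[r+1,n]$ by induction on $m$, invoking the identification of $f_m,h_m,k_m$ with the polynomials attached to $u_0$ viewed in $S_m$ and using the inductive hypothesis to evaluate $-k_m(0)$ as $\sum_{j=r+1}^{m-1}\delta_j$; you instead apply the identity once, in $R_n$ only, reading $k(0)=-\sum_{i=r+1}^{n-1}\delta_i$ directly off the explicit formula for $z_n(t)$ and getting $f(0)-h(0)=f_n(0)-h_n(0)=\delta_n$ straight from the definition of $\delta_n$. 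Your route is slightly leaner: it uses only the $j=n$ case of \cref{prop:all-big-expressions}, where $k(t)=\deg_{x_n}z_n(t)$ coincides verbatim with the Bonanzinga--Eliahou definition, so you bypass the mildly delicate reinterpretation of $h_m,k_m$ for $m<n$; the paper's induction, in exchange, records the formula for all smaller ambient rings along the way (which is anyway what the theorem asserts for each $n$). Your justification for specializing at $t=0$ via polynomiality (\cite[Corollary 4.13]{BE24}) matches the paper's own remark, and the sign of $k(0)$ is handled correctly. One small point: for $n=r+1$ the ``moreover'' part of \cref{prop:all-big-expressions} is not stated, but the empty-sum reading of your formula reduces to its first part ($z_{r+1}(t)=u_0x_n^t$, so $k(0)=0$ and $\tau_{r+1}=\delta_{r+1}$), so that boundary case is still covered.
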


\begin{proof}
    Fix $n\geq r+1$. We will show that
    \[
    \tau_m(x_r^d) = \sum_{j=r+1}^m\delta_j
    \]
    for each $m\in [r+1,n]$. For the base case $m=r+1$, we have
    \[
    \tau_m(x_r^d) = f_{r+1}(0)-h_{r+1}(0)-k_{r+1}(0) = f_{r+1}=\delta_{r+1},
    \]
    as desired, where the second equality follows from \cref{prop:all-big-expressions}. Now assume that $m>r+1$. We have
    \begin{align*}
        \tau_m(x_r^d) &=\left( f_m(0)-h_m(0)\right)-k_m(0) =\left(\delta_{m} \right) + \sum_{j=r+1}^{m-1} \delta_j
        =\sum_{j=r+1}^{m} \delta_j,
    \end{align*}
    as desired, where the second equality is due to our induction hypothesis.
\end{proof}

We are now ready to prove the main result.

\begin{proof}[Proof of Theorem~\ref{thm:main-4-recursive}]
    Fix an $n\geq r+1$. We remark that the formula of $\delta_{r+1}$ follows the same formula for $\delta_j$ for $j\in [r+2,n]$. Thus we will use this uniform formula. For each $j\in [r+1,n]$, we rewrite $\delta_j$ as follows:
    \begin{align*}
        \delta_j &= f_j + \binom{-\sum_{l=r+1}^{j-1}\delta_{l}+1}{2} + \sum_{i=3}^{j-r}\binom{-\sum_{l=r+1}^{j+1-i}\delta_{l}+i-2}{i}\\
        &= f_j + \binom{-\sum_{l=r+1}^{j-1}\delta_{l}+1}{2} + \sum_{i=r+1}^{j-2}\binom{-\sum_{l=r+1}^{i}\delta_{l}+j-1-i}{j+1-i}\\
        &= f_j + \binom{-\tau_{j-1}(x_r^d)+1}{2} + \sum_{i=r+1}^{j-2}\binom{-\tau_{i}(x_r^d)+j-1-i}{j+1-i}.
    \end{align*}
    We now have
    \begin{align*}
        \tau_n(x_r^d)&= \sum_{j=r+1}^n \delta_j\\
        &=\sum_{j=r+1}^n \left(f_j + \binom{-\tau_{j-1}(x_r^d)+1}{2} + \sum_{i=r+1}^{j-2}\binom{-\tau_{i}(x_r^d)+j-1-i}{j+1-i}\right)\\
        &=\left(\sum_{j=r+1}^n f_j\right) + \sum_{j=r+1}^n  \left(  \binom{-\tau_{j-1}(x_r^d)+1}{2} + \sum_{i=r+1}^{j-2}\binom{-\tau_{i}(x_r^d)+j-1-i}{j+1-i}\right).
    \end{align*}
    Let $\Delta_s$ be the sum of the binomial coefficients whose upper indices contain $\tau_{s}(x_r^d)$, for each $s\in [r+1,n-1]$. With this definition, we have
    \[
    \tau_n(x_r^d) = \left(\sum_{j=r+1}^n f_j\right) + \sum_{s=r+1}^{n-1} \Delta_s.
    \]
    We will simplify each summand above. For the first expression, we have
    \begin{align*}
        &\ \ \ \ \sum_{j=r+1}^n f_j\\ &=\sum_{j=r+1}^n \left( \binom{j+d-2}{j-1} - \binom{j-r+d-1}{j-r}\right)\\
        &= \left( \sum_{j=r+1}^n\binom{j+d-2}{j-1} \right) - \left(\sum_{j=r+1}^n\binom{j-r+d-1}{j-r}\right).
    \end{align*}
    By adding $\binom{r+d-1}{r-1}$ to the first sum and $1=\binom{d-1}{0}$ to the second sum, we obtain
    \begin{align*}
        &\ \ \ \ \sum_{j=r+1}^n f_j\\ 
        &= \begin{multlined}[t]
            \left( \binom{r+d-1}{r-1}+\sum_{j=r+1}^n\binom{j+d-2}{j-1} \right) - \left( \binom{d}{0} +\sum_{j=r+1}^n\binom{j-r+d-1}{j-r}\right)-\\
            \left(\binom{r+d-1}{r-1}-1 \right)
        \end{multlined}\\
        &=\left( \binom{r+d-1}{r-1}+ \sum_{j=r+1}^n\binom{j+d-2}{j-1} \right) - \left( \sum_{j=r}^n\binom{j-r+d-1}{j-r}\right)-
            \left(\binom{r+d-1}{r-1}-1 \right)\\
        &=\left( \binom{r+d-1}{r-1}+ \sum_{j=r}^{n-1}\binom{j+d-1}{j} \right) - \left( \sum_{j=0}^{n-r}\binom{j+d-1}{j}\right)-
            \left(\binom{r+d-1}{r-1}-1 \right)
    \end{align*}
    We apply \cref{lem:comb-2} to the very first summand $\binom{r+d-1}{r-1}$ to obtain
    \begin{align*}
        &\ \ \ \ \sum_{j=r+1}^n f_j\\ 
        &=\left( \sum_{j=0}^{r-1} \binom{j+d-1}{j}+ \sum_{j=r}^{n-1}\binom{j+d-1}{j} \right) - \left( \sum_{j=0}^{n-r}\binom{j+d-1}{j}\right)-
            \left(\binom{r+d-1}{r-1}-1 \right)\\
        &=\left( \sum_{j=0}^{n-1} \binom{j+d-1}{j} \right) - \left( \sum_{j=0}^{n-r}\binom{j+d-1}{j}\right)-
            \left(\binom{r+d-1}{r-1}-1 \right)
    \end{align*}
    Applying \cref{lem:comb-2} to the first two summands, we obtain
    \begin{align*}
        \sum_{j=r+1}^n f_j&= \left( \binom{n+d-1}{n-1} \right) - \left( \binom{n+d-r}{n-r} \right)- \left(\binom{r+d-1}{r-1}-1 \right)\\
        &=\binom{d+n-1}{d} - \binom{d+n-r}{d} - \binom{d+r-1}{d} +1.
    \end{align*}
    Next, for each $s\in [r+1,n-2]$, we have
    \begin{align*}
        \Delta_s &=  \binom{-\tau_s(x_r^d)+1}{2} + \sum_{j=s+2}^n  \binom{-\tau_s(x_r^d)+j-1-s}{j+1-s} \\
        &= \binom{-\tau_s(x_r^d)+1}{2} + \sum_{j=3}^{n+1-s}  \binom{-\tau_s(x_r^d)+j-2}{j}.
    \end{align*}
    Applying \cref{lem:comb-2} to the first summand, we obtain
    \[
    \binom{-\tau_s(x_r^d)+1}{2} = \sum_{j=0}^2 \binom{-\tau_s(x_r^d)+j-2}{j}. 
    \]
    Thus we can combine the sum and apply \cref{lem:comb-2} again:
    \begin{align*}
        \Delta_s  &= \sum_{j=0}^{n+1-s}  \binom{-\tau_s(x_r^d)+j-2}{j}= \binom{-\tau_s(x_r^d)+n+1-s}{n+1-s} = (-1)^{n+1-s} \binom{\tau_s(x_r^d)}{n+1-s},
    \end{align*}
    where the last equality is due to the formula for negative binomial coefficients $\binom{-a}{b}=(-1)^b\binom{a+b-1}{b}$ where $a\geq 1$ and $b\geq 0$.
    Finally, we compute $\Delta_{n-1}$:
    \[
    \Delta_{n-1} = \binom{-\tau_{n-1}(x_r^d)+1}{2} = (-1)^{n+1-(n-1)}  \binom{\tau_{n-1}(x_r^d)}{n+1-(n-1)}.
    \]
    The result then follows.
\end{proof}

\bibliographystyle{amsplain}
\bibliography{refs}

\end{document}